\begin{document}
	\title{Specialisation of Soergel Bimodules}
	\author{Benjamin McDonnell}
	\thanks{I would like to thank the DFG and the Graduiertenkolleg 1821 for the financial support. This paper is based on my doctoral thesis and I thank Wolfgang Soergel for his excellent supervision.}
	\thanks{Email address: mcdonnell.benjamin@gmail.com}
	\begin{abstract}
	We consider a generalisation of the specialisation functor on Soergel bimodules and show that this generalised version still takes Soergel bimodules to Soergel modules.
	\end{abstract}
	\maketitle


	\section{Introduction}
	It is natural to specialise a Soergel bimodule at either the origin or a generic point, obtaining a Soergel module in the first case. In the second case, the bimodule splits completely according to its standard flag. 

Here, we consider subgeneric specialisations of Soergel bimodules. In this case, there is a partial splitting of the bimodules. The main result shows this decomposition consists of Soergel modules for smaller Coxeter systems, which are determined by certain cosets of the original Coxeter system.

We follow the notation of \cite{so07} and \cite{hum_cox}.
	\section{The Specialisation Theorem}

	Given a Coxeter system $(W,S)$ with length function $l$ and finite-dimensional reflection faithful representation $V$ over an algebraically closed field $\bb{K}$ with characteristic char$\bb{K} \neq 2$, we set $R = R(V)$ the $\bb{K}$-algebra of regular functions over $V$. For $x\in W$, we have the graph,
	$$\Gr(x) = \{(x\lambda, \lambda) \mid \lambda \in V\} \subset V \times V.$$
	
	For $A\subset W$, the union of graphs is denoted $\Gr(A)$, with coordinate ring $R(A)$, following the notation of \cite[3.1, 3.3]{so07}. Finally, we have the category of Soergel bimodules,
	$$\SBim.$$
			
	\subsection{An Initial Decomposition}
	
	Let $M \in \SBim$ be a Soergel bimodule. Take a point $a \in V$ with corresponding maximal ideal $\mf{a} \subset R$. We write $\bb{K}_{\mf{a}} = R/\mf{a}$ and $-\otimes_R\bb{K}_{\mf{a}}$ is written without the tensor product symbol. We would like to understand the specialisation $M\bb{K}_{\mf{a}}$. The following argument shows that $M\bb{K}_{\mf{a}}$ has a direct sum decomposition.
	
	Consider that $R\otimes \bb{K}_{\mf{a}}$ is the coordinate ring of $V \times \{a\}.$ The projection map $$pr_0: V\times\{a\} \to V$$ is an isomorphism, giving an identification of $V\times\{a\}$ with $V$.

	Now we have 
	$$\Gr(W) \cap (V \times \{a\}) = \{(wa,a)\mid w \in W\},$$
	which is identified, via $pr_0$, with $\{wa\mid w \in W\} \subset V$. Suppose the stabiliser of $a$ is $\stab(a) \subset W$. Then $$\{wa\mid w \in W\} \cong W/\stab(a).$$ 
	
	A Soergel bimodule $M\in\SBim$ is supported on $\Gr(W)$, hence 
	\begin{equation}\label{eq:support}\tag{Support}\supp(M\bb{K}_{\mf{a}}) \subset \{wa \mid w \in W\}.\end{equation}

	This immediately gives a direct sum decomposition of $M\bb{K}_{\mf{a}}$, since its support is a disjoint union of points. 
	
	\subsection{The Main Theorem}
	In fact, we can say more about this decomposition. Theorem \ref{thm:sbim_decomp} shows that the decomposition consists of Soergel modules.
	
	\begin{definition}\label{def:tits_cone}
		We have the simple system of roots $\Delta$. Let 
		$$D := \{\lambda \in V \mid (\lambda,\alpha) \geq 0 \text{ for all } \alpha \in \Delta \}$$
		Then the \textbf{Tits cone} is defined to be the union of all $w(D), w \in W$.
	\end{definition}
	\begin{lemma}\label{lem:stab_is_cox}
		Let $a \in V$ be a point in the Tits cone. Then $\stab(a)$ is generated by the reflections it contains.
	\end{lemma}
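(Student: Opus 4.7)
The plan is to reduce to the case where $a$ already lies in the fundamental domain $D$, and then invoke the classical ``stabiliser theorem'' identifying the stabiliser of such a point with a standard parabolic subgroup.

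Since $a$ is in the Tits cone, there exist $w\in W$ and $b\in D$ with $a = w(b)$. Conjugation by $w$ is an isomorphism of $W$, and it sends the reflection $t$ fixing $v$ to the reflection $wtw^{-1}$ fixing $wv$, so it induces a bijection between reflections fixing $b$ and reflections fixing $a$. Consequently $\stab(a) = w\,\stab(b)\,w^{-1}$, and it is enough to prove the lemma for the point $b$: if $\stab(b)$ is generated by reflections, then so is its $W$-conjugate $\stab(a)$.

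For $b\in D$, I would apply the theorem of Tits (see Humphreys \cite{hum_cox}, Theorem 5.13) which states that $\stab(b) = W_I$, where
\[ I := \{s\in S \mid s(b)=b\} = \{s\in S \mid (b,\alpha_s) = 0\}. \]
In particular, $\stab(b)$ is generated by those simple reflections it contains, which are in particular reflections of $W$. Combining with the previous paragraph yields the lemma.

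The only point requiring care is that Humphreys works with the standard geometric representation of $(W,S)$, whereas we work in a reflection faithful representation $V$ over an algebraically closed field of characteristic $\neq 2$. I expect the classical argument to adapt verbatim: its ingredients are the reflection formula $s(\lambda) = \lambda - (\lambda,\alpha_s)\alpha_s^{\vee}$ (which requires only $\mathrm{char}\,\bb{K}\neq 2$ so that $(\alpha_s,\alpha_s^{\vee})=2$ is invertible) together with the purely combinatorial exchange condition and induction on length. The main obstacle, then, is less a mathematical one than a bookkeeping one: verifying that the notion of ``fundamental domain'', reflection, and root system used to state and prove Theorem 5.13 in \cite{hum_cox} translates faithfully to the geometric data $(V,\Delta)$ used here. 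I would spend the bulk of the writeup isolating this compatibility, after which the proof is a two-line reduction.
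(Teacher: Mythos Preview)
Your approach is essentially the same as the paper's: reduce to the fundamental domain $D$ via \cite[5.13]{hum_cox}, and then identify the stabiliser with a standard parabolic subgroup. The paper is terser, merely saying that once $D$ is known to be a fundamental domain the lemma ``follows from a similar proof to \cite[1.12]{hum_cox}''; your explicit conjugation step and your discussion of the compatibility between the standard geometric representation and a reflection faithful $V$ over $\bb{K}$ make precise exactly why the paper writes ``a similar proof to'' rather than citing the theorem outright.
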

	\begin{proof}
	By \cite[5.13]{hum_cox}, $D$ is a fundamental domain for the Tits cone. The lemma then follows from a similar proof to \cite[1.12]{hum_cox}.
	\end{proof}
	Since $\stab(a)$ is generated by the reflections it contains, it must be a Coxeter group. We can upgrade this to a Coxeter system by choosing the simple reflections to be those reflections in $\stab(a)$ which have minimal length with regard to the original length function $l$ of $(W,S)$. Denote this new Coxeter system as $(\stab(a), S_a)$.
	\begin{definition}
		Let $\mf{a} \subset R$ be a maximal ideal and $a \in V$ be the point corresponding to $\mf{a}$. If $a$ is a point in the Tits cone, then call $\mf{a}$ a \textbf{Tits ideal}.
	\end{definition}
	
	\begin{theorem}\label{thm:sbim_decomp} Given a Soergel bimodule $B$ and a Tits ideal $\mf{a}$, the specialisation $B\otimes_R \bb{K}_{\mf{a}}$ decomposes as a direct sum of Soergel modules for smaller Coxeter systems.
	\end{theorem}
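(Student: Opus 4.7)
The plan is to reduce to Bott--Samelson bimodules and induct on word length from the left. Every indecomposable Soergel bimodule is a direct summand of some Bott--Samelson bimodule $\mathrm{BS}(\underline{s}) = R \otimes_{R^{s_1}} R \otimes_{R^{s_2}} \cdots \otimes_{R^{s_n}} R$. Since specialisation is additive and each candidate target Soergel-module category is Krull--Schmidt, it suffices to treat $B = \mathrm{BS}(\underline{s})$. The base case $n = 0$ is immediate: $R \otimes_R \bb{K}_{\mf{a}} = \bb{K}_{\mf{a}}$ is the unit Soergel module for $(\stab(a), S_a)$.

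For the inductive step, write
\[
\mathrm{BS}(s_1, \ldots, s_n) \otimes_R \bb{K}_{\mf{a}} \;=\; R \otimes_{R^{s_1}} \bigl( \mathrm{BS}(s_2, \ldots, s_n) \otimes_R \bb{K}_{\mf{a}} \bigr)
\]
and apply the inductive hypothesis to the inner factor, obtaining a decomposition $\bigoplus_j N_j$ in which each $N_j$ is a Soergel module for some smaller Coxeter system $(\stab(b_j), S_{b_j})$ and is supported at a point $b_j \in Wa$. It then suffices to understand each $R \otimes_{R^{s_1}} N_j$. If $s_1 \notin \stab(b_j)$, then $b_j \neq s_1 b_j$, the extension $R^{s_1} \subset R$ is \'etale at these points, and $R \otimes_{R^{s_1}} N_j \cong N_j \oplus s_1^{*} N_j$, the second summand being a Soergel module for the conjugate Coxeter system $(\stab(s_1 b_j), S_{s_1 b_j})$. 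If $s_1 \in \stab(b_j)$, then $R \otimes_{R^{s_1}} R$ is itself a Soergel bimodule for $(\stab(b_j), S_{b_j})$ on the same reflection-faithful representation $V$, and tensoring with it preserves the Soergel-module category of that smaller system.

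The main obstacle is justifying the last assertion when $s_1$ is a reflection in $\stab(b_j)$ that is \emph{not} among the chosen simple reflections $S_{b_j}$. By the definition of $\SBim$ following \cite{so07}, Soergel bimodules are generated under direct summands and shifts by $R$ and the Bott--Samelson bimodules indexed by simple reflections, and while $R \otimes_{R^{s_1}} R$ is a Bott--Samelson generator for $(W, S)$, it is not obviously one for $(\stab(b_j), S_{b_j})$. One must establish separately that, in any Coxeter system $(W', S')$, the bimodule $R \otimes_{R^t} R$ associated to an arbitrary reflection $t \in W'$ lies in the Soergel-bimodule category: writing $t = u s_0 u^{-1}$ with $s_0 \in S'$ and $u \in W'$, one exhibits $R \otimes_{R^t} R$ as a direct summand of an iterated Bott--Samelson built from a reduced expression for $u s_0 u^{-1}$. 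With this lemma in hand, the induction closes, and the bookkeeping across the cosets of $W/\stab(a)$ yields the claimed decomposition into Soergel modules for the various smaller Coxeter systems $(\stab(wa), S_{wa})$.
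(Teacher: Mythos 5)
Your overall strategy is the same as the paper's: reduce to Bott--Samelson bimodules, induct on word length by peeling off the leftmost factor $R\otimes_{R^{s_1}}-$, split into the cases $s_1\in\stab(b_j)$ and $s_1\notin\stab(b_j)$, and in the latter case use the two-point support $\{b_j, s_1b_j\}$ to split off a conjugated Bott--Samelson module at $s_1b_j$ (the paper's Lemma \ref{lem:s_twist} is exactly your ``$s_1^*N_j$'' summand). The one place you genuinely diverge is the sub-case $s_1\in\stab(b_j)$ with $s_1\notin S_{b_j}$, and there your proposed fix does not work.

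The auxiliary lemma you invoke --- that for an arbitrary reflection $t$ of a Coxeter system $(W',S')$ the bimodule $R\otimes_{R^t}R$ is a Soergel bimodule, realised as a summand of a Bott--Samelson for a reduced expression of $t$ --- is false whenever $t$ is not simple. The bimodule $R\otimes_{R^t}R$ has rank two as a left $R$-module and support $\Gr(\{e,t\})$. Any Soergel bimodule whose support meets $\Gr(t)$ must contain an indecomposable summand $B_x$ with $x\geq t$ in Bruhat order, and the support of $B_x$ is $\Gr(\{y\mid y\leq x\})$, which strictly contains $\Gr(\{e,t\})$ as soon as $l(t)>1$ (the Bruhat interval $[e,t]$ is then larger than $\{e,t\}$). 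Concretely, in type $A_2$ with $t=s_1s_2s_1$, the bimodule $R\otimes_{R^t}R$ has rank $2$, while the smallest Soergel bimodule supported at $\Gr(t)$ is $B_{s_1s_2s_1}=R\otimes_{R^W}R$ of rank $6$. So the inductive step as you have written it does not close.

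Fortunately, the problematic sub-case never arises, and this is what the paper tacitly uses: if $s$ is a \emph{simple} reflection of $(W,S)$ lying in a reflection subgroup $W'\subseteq W$, then $s$ automatically belongs to the canonical set of Coxeter generators of $W'$ in the sense of Deodhar and Dyer, because the set $N(s)=\{r\in T\mid l(rs)<l(s)\}$ equals $\{s\}$ and hence $N(s)\cap W'=\{s\}$. Applied to $W'=\stab(b_j)=t\stab(a)t^{-1}$, this gives $s_1\in S_{b_j}$, so $R\otimes_{R^{s_1}}R$ is an honest Bott--Samelson generator for the smaller system and the induction closes. The right move is therefore to prove (or cite) this compatibility between simplicity in $(W,S)$ and simplicity in $(\stab(b_j),S_{b_j})$ --- checking that the chosen generating set $S_{b_j}$ really is the canonical one --- rather than to try to enlarge the Soergel category by bimodules attached to non-simple reflections.
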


	The precise sense of the theorem deserves an explanation. We will see which smaller Coxeter systems are relevant and what it means to decompose as a direct sum of Soergel modules.
	
	\begin{lemma}
	Let $a \in V$ be a point in the Tits cone and $t \in W/\stab(a)$ be a minimal length representative. Then 
	$$(t\stab(a)t^{-1}, tS_at^{-1})$$ is a Coxeter system.	
	\end{lemma}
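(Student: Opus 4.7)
The plan is to argue by transport of structure via the group isomorphism $\phi_t \colon \stab(a) \to t\stab(a)t^{-1}$, $w \mapsto twt^{-1}$ (with inverse conjugation by $t^{-1}$). Since $\phi_t$ is bijective and sends $S_a$ onto $tS_at^{-1}$, the latter set generates $t\stab(a)t^{-1}$. The main task is then to show that the Coxeter presentation of $(\stab(a), S_a)$, established via Lemma \ref{lem:stab_is_cox} together with the minimal-length choice of $S_a$, is preserved under $\phi_t$.

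Concretely, I would first note that each element $tst^{-1}$ with $s \in S_a$ is a reflection in $W$, since conjugates of reflections by elements of $W$ are reflections; in particular it is an involution. Next, the identity
$$\bigl((tst^{-1})(ts't^{-1})\bigr)^{m} = t(ss')^{m}t^{-1}$$
shows that the order of $(tst^{-1})(ts't^{-1})$ equals the order of $ss'$, so the Coxeter matrix is preserved under $\phi_t$. Finally, to upgrade these relations to a presentation, I would invoke the standard fact that if $\phi \colon G \to G'$ is a group isomorphism and $G$ is presented by $\langle S \mid R \rangle$, then $G'$ is presented by $\langle \phi(S) \mid \phi(R) \rangle$; applied to the Coxeter presentation of $(\stab(a), S_a)$, this yields exactly the Coxeter presentation of $(t\stab(a)t^{-1}, tS_at^{-1})$ with the same Coxeter matrix.

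I do not anticipate any substantive obstacle here, as the lemma is essentially invariance of Coxeter data under conjugation in an ambient group. The hypothesis that $t$ be a minimal-length coset representative plays no role in this lemma itself; it presumably becomes relevant later, when these smaller Coxeter systems are matched to the summands appearing in the decomposition of $B \otimes_R \bb{K}_{\mf{a}}$ in Theorem \ref{thm:sbim_decomp}.
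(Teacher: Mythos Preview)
Your transport-of-structure argument is correct and essentially matches the paper's approach: both start from the Coxeter system $(\stab(a), S_a)$ supplied by Lemma~\ref{lem:stab_is_cox}, observe that conjugates of reflections are again reflections, and pass to the conjugate group via $\phi_t$. Your version is more explicit than the paper's three-sentence proof, actually spelling out why the Coxeter presentation survives under $\phi_t$.

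One point of divergence is worth flagging. You claim that the minimal-length hypothesis on $t$ plays no role, whereas the paper does invoke it: ``Since $t$ was a minimal length representative, $tS_at^{-1}$ forms a set of simple reflections for the Coxeter group $t\stab(a)t^{-1}$.'' For the bare assertion that the pair is an abstract Coxeter system you are right---conjugation by any $t\in W$ would do. The paper's appeal to minimality seems directed at what immediately follows the lemma, where the resulting pair is denoted $(\stab(ta), S_{ta})$: if one wants $tS_at^{-1}$ to coincide with the set obtained by applying the same ``minimal-length reflections'' recipe directly to $\stab(ta)$, then some condition on $t$ is plausibly required. That identification, however, is not part of the lemma as stated, so your observation stands for the lemma itself.
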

	\begin{proof}
		We already saw from lemma \ref{lem:stab_is_cox} that $(\stab(a),S_a)$ is a Coxeter system. The conjugate of a reflection is again a reflection, so $tS_at^{-1}$ is a set of reflections. Since $t$ was a minimal length representative, $tS_at^{-1}$ forms a set of simple reflections for the Coxeter group $t\stab(a)t^{-1}$.
	\end{proof}
	The previous lemma defines the Coxeter system \textit{at the point $ta$}, which we denote $(\stab(ta), S_{ta})$.
	
	The category $\SBim$ depends on a Coxeter system $(W,S)$ and a realisation $R$. We can make these choices explicit with the notation $\SBim(W,S,R)$. The precise sense of theorem \ref{thm:sbim_decomp} should be understood with the following diagram and subsequent explanation.
	$$\begin{tikzcd}
		{\SBim(W,S,R)} \arrow[r, "-\otimes_R\bb{K}_{\mf{a}}"] &[1em] R\Modall \arrow[r, "\res_{ta}"] & \SMod (t\stab(a)t^{-1},S_{ta}, R)                                          \\
		&                      & {\SBim(t\stab(a)t^{-1},S_{ta}, R)} \arrow[u, "-\otimes_R\bb{K}_{\mf{ta}}"']
	\end{tikzcd}
	$$
	where $\res_{ta}$ is the functor which projects to the direct summand supported at $ta$, as in the \eqref{eq:support} formula above. Theorem \ref{thm:sbim_decomp} claims that: for every $t \in W/\stab(a)$ a minimal length representative, the image of the horizontal composition is contained in the image of the vertical functor.
	
	\begin{remark}
		Typically, the functor from $\SBim$ to $\SMod$ (the vertical arrow in the above diagram) would be specialisation at $0$. However, we consider specialisation at $ta$ to give a functor from $\SBim$ to $\SMod$. This is essentially just an abuse of notation, but is justified because $\stab(ta) = t\stab(a)t^{-1}$. Hence specialisation at $ta$ will not cause a bimodule in $\SBim(t\stab(a)t^{-1},S_{ta}, R)$ to decompose. In this way, we see that this specialisation at $ta$ is analogous to specialisation at $0$.
	\end{remark}
	
	
	\subsection{Proof of Theorem \ref{thm:sbim_decomp}}

	\begin{notation} Let $\underline{w} = (s, t, \dots, u)$ be a sequence of simple reflections. Then write $B(\underline{w}) = R\otimes_{R^s}R\otimes_{R^t} \cdots \otimes_{R^u}R$. Bimodules of this form are called Bott-Samuelson bimodules.
	\end{notation}
	It will be sufficient to prove theorem \ref{thm:sbim_decomp} for Bott-Samuelson bimodules.
	\begin{remark}
	We can deduce the theorem for all indecomposable Soergel bimodules by an induction argument. The indecomposable bimodules are classified by $W$, and we write $B_w$ for the indecomposable corresponding to $w\in W$. Suppose the theorem holds for all indecomposables $B_x$ with $l(x)<l(w)$. Now there are integers $n_x$ such that $$B(\tu{w}) = B_w \oplus \bigoplus_{\substack{x\in W \\ l(x)<l(w)}} B_x^{\oplus n_x}$$ 
	The theorem holds for $B(\tu{w})$, as it is a Bott-Samuelson, and the theorem holds for every summand except $B_w$ by induction. It follows that the theorem holds for $B_w$ also.
	\end{remark}
	\begin{lemma}
		Given a Soergel bimodule of the form $B(\underline{w})$ for some sequence $\underline{w}$ and a Tits ideal $\mf{a}$, the specialisation $B(\underline{w})\otimes_R \bb{K}_{\mf{a}}$ decomposes as a direct sum of Bott-Samuelson modules for smaller Coxeter systems.		
	\end{lemma}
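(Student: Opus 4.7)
The plan is to proceed by induction on the length of $\underline{w}$. The base case $\underline{w} = ()$ is immediate: $B(\underline{w}) \otimes_R \bb{K}_{\mf{a}} = \bb{K}_{\mf{a}}$ is the Bott-Samuelson module for the empty sequence in $(\stab(a), S_a)$ at $t = e$. For the inductive step, write $\underline{w} = (\underline{v}, s)$ and observe that
\[
B(\underline{w}) \otimes_R \bb{K}_{\mf{a}} = B(\underline{v}) \otimes_{R^s} \bb{K}_{\mf{a}},
\]
splitting the analysis according to whether $s$ fixes $a$. If $sa \neq a$, the preimage in $V$ of the image of $a$ in $V/\langle s\rangle$ consists of two distinct points, so $R \otimes_{R^s} \bb{K}_{\mf{a}} \cong \bb{K}_{\mf{a}} \oplus \bb{K}_{\mf{sa}}$; this gives
\[
B(\underline{w}) \otimes_R \bb{K}_{\mf{a}} \cong (B(\underline{v}) \otimes_R \bb{K}_{\mf{a}}) \oplus (B(\underline{v}) \otimes_R \bb{K}_{\mf{sa}}),
\]
and since both $\mf{a}$ and $\mf{sa}$ are Tits ideals (as $sa$ lies in the $W$-orbit of $a$), the induction hypothesis applied to each summand concludes this case.

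If $sa = a$, so that $s \in S_a$, the module $R \otimes_{R^s} \bb{K}_{\mf{a}} \cong \bb{K}[\alpha_s]/(\alpha_s^2)$ sits in a non-split short exact sequence $0 \to \bb{K}_{\mf{a}} \to R \otimes_{R^s} \bb{K}_{\mf{a}} \to \bb{K}_{\mf{a}} \to 0$. Tensoring with $B(\underline{v})$, which is flat as a right $R$-module, preserves exactness and yields
\[
0 \to B(\underline{v}) \otimes_R \bb{K}_{\mf{a}} \to B(\underline{w}) \otimes_R \bb{K}_{\mf{a}} \to B(\underline{v}) \otimes_R \bb{K}_{\mf{a}} \to 0.
\]
By the inductive hypothesis, the outer terms decompose as $\bigoplus_t M_t$ with each $M_t$ a direct sum of Bott-Samuelson modules for $(\stab(ta), S_{ta})$. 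Since the summands are supported at distinct points, the sequence splits by support into $0 \to M_t \to L_t \to M_t \to 0$ for each minimal length representative $t$, where $L_t$ denotes the $ta$-summand of $B(\underline{w}) \otimes_R \bb{K}_{\mf{a}}$.

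The main obstacle is the explicit identification of $L_t$. Writing $M_t = \bigoplus_i B_{ta}(\underline{u}^i) \otimes_R \bb{K}_{\mf{ta}}$, one must verify that
\[
L_t = \bigoplus_i B_{ta}(\underline{u}^i, tst^{-1}) \otimes_R \bb{K}_{\mf{ta}},
\]
so that each Bott-Samuelson sequence of $M_t$ is extended by appending the reflection $tst^{-1}$. Since $s \in S_a$ and $S_{ta} = tS_at^{-1}$, this reflection does lie in $S_{ta}$, so the extended sequence is valid in the smaller Coxeter system. The verification reduces to matching the right $R^s$-structure from the big Coxeter system with the right $R^{tst^{-1}}$-structure at the small one at $ta$, an identification mediated by the conjugation isomorphism between $R^s$ and $R^{tst^{-1}}$ induced by $t$; concretely, one uses the decomposition $R = R^s \oplus \alpha_s R^s$ to trace how the right action transforms when restricting to the $ta$-component.
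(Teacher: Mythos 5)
Your induction peels the \emph{rightmost} letter off $\underline{w}$, whereas the paper peels the leftmost one, and this choice is what creates the gap. Your first case ($sa\neq a$) is fine: $R\otimes_{R^s}\bb{K}_{\mf{a}}\cong \bb{K}_{\mf{a}}\oplus\bb{K}_{s\mf{a}}$ and the orbit of a Tits-cone point stays in the Tits cone, so induction applies to both summands. The problem is the case $sa=a$, exactly at the point you flag as "the main obstacle": the identification $L_t\cong\bigoplus_i B_{ta}(\underline{u}^i,tst^{-1})\otimes_R\bb{K}_{t\mf{a}}$ is asserted, not proved, and it \emph{cannot} be deduced from your inductive hypothesis as stated. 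The hypothesis only tells you the isomorphism class of $B(\underline{v})\otimes_R\bb{K}_{\mf{a}}$ as a left $R$-module, but $L_t=\res_{ta}\left(B(\underline{v})\otimes_{R^s}\bb{K}_{\mf{a}}\right)=\res_{ta}\left(B(\underline{v})\otimes_R(R/\mf{a}^sR)\right)$ depends on finer data, namely how the right $R^s$-action on $B(\underline{v})$ behaves infinitesimally along the fibre over $a$ (equivalently, how the graphs $\Gr(x)$, $x\in t\stab(a)$, meet the non-reduced fibre $\Spec(R/\mf{a}^sR)$). Two modules with the same reduction mod $\mf{a}$ can have non-isomorphic reductions mod $\mf{a}^sR$; the short exact sequence $0\to M_t\to L_t\to M_t\to 0$ and even freeness over $\bb{K}[\alpha_s]/(\alpha_s^2)$ do not pin down the left $R$-module $L_t$. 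To close the gap you would have to strengthen the induction — e.g.\ prove the decomposition for the thickened specialisations $B(\underline{v})\otimes_R(R/\mf{a}^sR)$ as well, or carry the decomposition as one of $(R,R^s)$-modules — and that is where all the remaining work lives.

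The paper sidesteps this entirely by writing $\underline{w}=(s,\underline{w}')$ and using $B(\underline{w})\bb{K}_{\mf{a}}=B_s\otimes_R\bigl(B(\underline{w}')\bb{K}_{\mf{a}}\bigr)$: since $B_s\otimes_R-$ is a functor on left $R$-modules, it can be applied directly to the decomposition supplied by induction, and each summand $B_sB(\underline{v}_j)\bb{K}_{t\mf{a}}$ is then handled by a support computation ($\supp=\{ta,sta\}$ when $s\notin t\stab(a)t^{-1}$) together with the twisting isomorphism $R(s)B(w)\cong B(sws)R(s)$ of Lemma \ref{lem:s_twist}. I would recommend either reorganising your induction in this way or proving the strengthened inductive statement; as written, the $sa=a$ case is incomplete.
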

	\begin{proof}
		We proceed by induction on the length of the sequence $\underline{w}$. Suppose that the lemma holds for all sequences of length less than the length of $\tu{w}$. Suppose $\underline{w} = (s, \underline{w}')$ for some sequence $\tu{w}'$ and some simple reflection $s$.
		
		Now note that the following diagram commutes
		$$
		\begin{tikzcd}
		\SBim \arrow[d, "-\otimes_R \bb{K}_{\mf{a}}"'] \arrow[r, "B_s\otimes_R-"] & \SBim \arrow[d, "-\otimes_R \bb{K}_{\mf{a}}"] \\
		R\Modall \arrow[r, "B_s\otimes_R-"'] & R\Modall          
		\end{tikzcd}
		$$
		
		We would like to calculate $B(\tu{w})\bb{K}_{\mf{a}} =B_sB(\underline{w}')\bb{K}_{\mf{a}}$. By induction, $B(\underline{w}')\bb{K}_{\mf{a}}$ is a direct sum of Bott-Samuelson modules for smaller Coxeter systems. Let's focus our attention on the point $ta$ for some minimal length representative $t\in W/\stab(a)$. We have
		$$\res_{ta} B(\underline{w}')\bb{K}_{\mf{a}}= \bigoplus_j B(\underline{v}_j)\bb{K}_{t\mf{a}}$$
		where $\underline{v}_j$ are sequences of simple reflections in $S_{ta}$. Recall that $S_{ta}$ is the set of simple reflections of the Coxeter system ``at the point $ta$.''
		
		Now consider any chosen summand $B_sB(\underline{v}_j)\bb{K}_{t\mf{a}}$. Our goal is to show that this is a direct sum of Bott-Samuelson modules. Once we prove this, we are done.
		
		There are two cases. First, if $s \in (t\stab(a)t^{-1},S_{ta})$ then $B_sB(\underline{v}_j)$ is a Bott-Samuelson bimodule in $\SBim(t\stab(a)t^{-1},S_{ta}, R)$, and we're done.
		
		The other case is $s \notin (t\stab(a)t^{-1},S_{ta})$. Recall that  $B_s = R(e,s),$ i.e. the coordinate ring of the graph $\Gr({e,s})$. Then by \cite[3.19]{atimac},
		$$\supp B_sB(\underline{v}_j)\bb{K}_{t\mf{a}} = \{ta, sta\}.$$ 
		
		This description of the support shows that we have a splitting given by,
		$$B_sB(\underline{v}_j)\bb{K}_{t\mf{a}} = \left(R(e) \oplus R(s)\right)B(\underline{v}_j)\bb{K}_{t\mf{a}}$$
		
		To finish the proof, we refer to Lemma \ref{lem:s_twist}. We then see that 
		$$\left(R(e) \oplus R(s)\right)B(\underline{v}_j)\bb{K}_{t\mf{a}} = B(\underline{v}_j)\bb{K}_{t\mf{a}} \oplus B(s\underline{v}_js)\bb{K}_{st\mf{a}},$$
		where $s\tu{v}_js$ denotes that we conjugate every entry of the sequence $\underline{v}_j$ by $s$. Observe that both these summands are of the form required by the lemma. In fact, 
		$$B(\underline{v}_j)\bb{K}_{t\mf{a}} \in  \SMod (t\stab(a)t^{-1},S_{ta}, R)$$ and 
		$$B(s\underline{v}_js)\bb{K}_{st\mf{a}} \in \SMod (st\stab(a)t^{-1}s^{-1},S_{sta}, R).$$ 
		
		Thus we have achieved our goal and the lemma is proved.
	\end{proof}
	\begin{lemma}\label{lem:s_twist}
		Let $w \in W$ and $B(w)= R\otimes_{R^w} R$. Then we have an $(R,R)$-bimodule isomorphism $$R(s)B(w) \overset{\sim}\to B(sws)R(s).$$
		Similarly, we have an $R$-module isomorphism $R(s)\bb{K}_{w\mf{a}}\overset{\sim}\to \bb{K}_{sw\mf{a}}.$
	\end{lemma}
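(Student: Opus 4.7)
The plan is to reduce both assertions to a single twisting principle for the bimodule $R(s)$. The key observation is that $R(s)$ is free of rank one as both a left and a right $R$-module, with a generator $v_s$ satisfying $r\cdot v_s = v_s\cdot s(r)$ for every $r\in R$. Consequently, for any $(R,R)$-bimodule $M$, the map $v_s\otimes m\mapsto m$ identifies $R(s)\otimes_R M$ with $M$ as an abelian group but with the left $R$-action twisted by $s$; denote this twisted bimodule by ${}^sM$. Symmetrically, $M\otimes_R R(s)\cong M^s$, with the right action twisted by $s$. Hence the first claim reduces to exhibiting a bimodule isomorphism ${}^sB(w)\cong B(sws)^s$.

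Next I would observe that conjugation by $s$ restricts to a ring isomorphism $s\colon R^w\xrightarrow{\sim}R^{sws}$, since $w(r)=r$ is equivalent to $sws(s(r))=s(r)$. This makes $\phi(a\otimes b):=s(a)\otimes s(b)$ well-defined as a map $R\otimes_{R^w}R\to R\otimes_{R^{sws}}R$: the balancing $ar\otimes b=a\otimes rb$ for $r\in R^w$ transports under $\phi$ to $s(a)s(r)\otimes s(b)=s(a)\otimes s(r)s(b)$, which is the balancing over $R^{sws}$ since $s(r)\in R^{sws}$. A routine check shows that $\phi$ intertwines the twisted bimodule structures, i.e.\ $\phi\colon {}^sB(w)\to B(sws)^s$ is a map of $(R,R)$-bimodules, and it is a bijection because $s$ is an involution. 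Composing with the identifications above yields the desired isomorphism $R(s)B(w)\cong B(sws)R(s)$.

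For the second assertion, the same twisting principle applies: $R(s)\otimes_R\bb{K}_{w\mf{a}}$ is $\bb{K}_{w\mf{a}}=R/w\mf{a}$ with its $R$-action twisted by $s$. The annihilator of its generator under the twisted action is $\{r\in R : s(r)\in w\mf{a}\}=s(w\mf{a})=sw\mf{a}$ (using $s^2=e$), so the resulting module is $R/sw\mf{a}=\bb{K}_{sw\mf{a}}$. The only substantive item to verify is that $\phi$ correctly exchanges the balancing over $R^w$ with the balancing over $R^{sws}$ while intertwining the twisted actions; everything else is a matter of unwinding definitions, so I do not anticipate any real obstacle beyond careful bookkeeping.
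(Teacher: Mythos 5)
Your proposal is correct and follows essentially the same route as the paper: you identify $R(s)\otimes_R -$ and $-\otimes_R R(s)$ with twisting the left (resp.\ right) $R$-action by $s$, and then use the map $a\otimes b\mapsto s(a)\otimes s(b)$, which is exactly the isomorphism the paper constructs and verifies. The only cosmetic differences are your generator-based formulation of the twisting principle and your explicit annihilator computation for $R(s)\bb{K}_{w\mf{a}}\cong\bb{K}_{sw\mf{a}}$, which the paper dismisses as analogous and easier.
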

	\begin{proof}
		Note that there is an $(R,R)$-bimodule isomorphism given by
		$$R(s) = \frac{R \otimes R}{r\otimes 1 - 1 \otimes s(r)} \overset{\sim}\longrightarrow R$$
		$$x\otimes y \mapsto s(x)y$$ where we consider the left $R$ action on $R\in (R,R)$-Bimod to be twisted by $s$.
		
		This isomorphism allows us to write 
		$$R(s)B(w) \overset{\sim}\longrightarrow R \otimes_R R\otimes_{R^w} R \cong R \otimes_{R^w} R$$
		where the left $R$ action is twisted by $s$ and the right $R$ action is the natural one.

		Similarly,
		$$B(sws)R(s) \overset{\sim}\longrightarrow R\otimes_{R^{sws}}R\otimes_RR \cong R\otimes_{R^{sws}}R$$
		where the left $R$ action is the natural one and the right $R$ action is twisted by $s$.

		Using these two descriptions, define a map
		$$R\otimes_{R^w}R \to R\otimes_{R^{sws}}R$$
		$$r_1 \otimes r_2 \mapsto s(r_1) \otimes s(r_2).$$

		First, we check whether this is well-defined. Suppose $r \in R^w$, so that $r\otimes1=1\otimes r$. Then we need to check that $s(r)\otimes 1 = 1 \otimes s(r) \in R\otimes_{R^{sws}}R$, or in other words we should check that $s(r)\in R^{sws}$. This is clearly true, since $sws s(r) = sw(r) = s(r)$.
		
		Now we need to check that the map is indeed an $(R,R)$-bimodule homomorphism. First, check the left $R$-action:
		$$r\cdot (a\otimes b) = s(r)a\otimes b \mapsto s(s(r)a)\otimes s(b)= rs(a)\otimes s(b) = r\cdot(s(a)\otimes s(b)).$$

		Similarly, 
		$$(a\otimes b)\cdot r = a\otimes br \mapsto s(a)\otimes s(br)= s(a)\otimes s(b)s(r) = (s(a)\otimes s(b))\cdot r.$$

		Finally, it should be clear that the map is a bijection, and hence it is an $(R,R)$-bimodule isomorphism.
		
		The construction of the isomorphism $R(s)\bb{K}_{w\mf{a}}\overset{\sim}\to \bb{K}_{sw\mf{a}}$ is analogous, and in fact easier.		
	\end{proof}
	
	The proof of theorem \ref{thm:sbim_decomp} is now complete.

	\begin{remark}
		If we know the multiplicities of standard composition factors for our chosen Soergel bimodule, then we can immediately compute the direct sum decomposition of the specialisation. This is because the standard flag is preserved.
	\end{remark}
\begin{remark}
	The techniques applied here should also be valid in the setting of \cite{abe19}. A connection with the work of \cite{hazi} is a consequence.
\end{remark}

	\bibliography{references} 

\begin{thebibliography}{Hum90}

\bibitem[Abe19]{abe19}
Noriyuki Abe.
\newblock On {S}oergel bimodules.
\newblock 2019.
\newblock \href{https://arxiv.org/abs/1901.02336}{arXiv:1901.02336}.

\bibitem[AM69]{atimac}
M.~F. Atiyah and I.~G. Macdonald.
\newblock {\em Introduction to commutative algebra}.
\newblock Addison-Wesley Publishing Co., Reading, Mass.-London-Don Mills, Ont.,
  1969.

\bibitem[Haz17]{hazi}
Amit Hazi.
\newblock Matrix recursion for positive characteristic diagrammatic {S}oergel
  bimodules for affine {W}eyl groups.
\newblock 2017.
\newblock \href{https://arxiv.org/abs/1708.07072}{arXiv:1708.07072}.

\bibitem[Hum90]{hum_cox}
James~E. Humphreys.
\newblock {\em Reflection groups and {C}oxeter groups}, volume~29 of {\em
  Cambridge Studies in Advanced Mathematics}.
\newblock Cambridge University Press, Cambridge, 1990.

\bibitem[Soe07]{so07}
Wolfgang Soergel.
\newblock Kazhdan-{L}usztig-{P}olynome und unzerlegbare {B}imoduln \"{u}ber
  {P}olynomringen.
\newblock {\em J. Inst. Math. Jussieu}, 6(3):501--525, 2007.

\end{thebibliography}
	\bibliographystyle{alpha}	
	
\end{document}